\theoremstyle{plain}
\newtheorem{theorem}{Theorem}
\newtheorem{Lem}[theorem]{Lemma}
\newtheorem{Pro}[theorem]{Proposition}
\newtheorem{Cor}[theorem]{Corollary}
\theoremstyle{definition}
\newtheorem{Def}{Definition}
\newtheorem*{Property}{Property}
\theoremstyle{remark}
\newtheorem{remark}{Remark}
\newcommand{\Spec}{\mathop{\mathrm {Spec}}}
\newcommand{\Z}{{\mathbb Z}}
\newcommand{\Q}{{\mathbb Q}}
\newcommand{\kk}{{\mathbf k}}
\newcommand{\hull}{\mathop{{\mathrm{hull}}}\nolimits}
\newcommand{\gr}{\mathop{{\mathrm{gr}}}}
\newcommand{\ind}{\mathop{{\mathrm{ind}}}\nolimits}
\newcommand{\red}{{\mathrm{red}}}
\newcommand{\opp}{{\mathrm{opp}}}
\newcommand{\h}{{\mathrm{H}}}
\title[An integrality theorem of Grosshans]{An integrality theorem of Grosshans over arbitrary base ring}
\authors{Wilberd van der Kallen
\address Mathematisch Instituut\\ P.O. Box 80.010\\ 3508 TA Utrecht\\ The Netherlands
\email W.vanderKallen@uu.nl}
\dedicatory{Dedicated to the memory of T.~A.~Springer}
\begin{document}

\maketitle

\begin{abstract}
We revisit a theorem of Grosshans and show that it holds over arbitrary commutative base ring $\kk$.
One considers a split reductive group scheme $G$ acting on a $\kk$-algebra $A$ and leaving invariant a subalgebra $R$.
Let $U$ be the unipotent radical of a split Borel subgroup scheme.
If $R^U=A^U$ then the conclusion is that $A$ is integral over $R$.
\end{abstract}

\section*{Introduction}
In \cite{Grosshans contr} Grosshans considered a reductive algebraic group $G$ defined over an algebraically closed field $\kk$
acting algebraically on a commutative $\kk$-algebra $A$. Fix a Borel subgroup $B$ with unipotent radical $U$.
Then Grosshans considered the smallest $G$-invariant $\kk$-subalgebra $G\cdot A^U$ of $A$ that contains the fixed point
algebra $A^U$.
He showed that $A$ is integral over $G\cdot A^U$. If $R$ is any other $G$-invariant $\kk$-subalgebra of $A$ that contains $A^U$ 
it then follows that $A$ is integral over $R$. One of the tools used by Grosshans is what is called power reductivity
in \cite{FvdK}. As it is shown in \cite{FvdK} that power reductivity holds over arbitrary commutative base ring $\kk$, we now 
set out to prove the integrality result of Grosshans in the same generality. We need a little care as we are not even assuming
that the ground ring is noetherian.

\section{Preliminaries}
We use an arbitrary commutative ring $\kk$ as base ring. 
Let $A$ be a commutative $\kk$-algebra.
We say that an affine algebraic group scheme $G$ acts on $A$  if $A$ is a $G$-module \cite{J} and
 the multiplication map $A\otimes_\kk A\to A$ is a $G$-module map. Then the coaction $A\to A\otimes_\kk \kk[G]$ is 
an algebra homomorphism. One also says that $G$ acts rationally on $A$ 
by algebra automorphisms. Geometrically it means that $G$ acts from the right on $\Spec A$.

\begin{Lem}\label{nilinv}
 Let $G$ be a smooth affine algebraic group scheme over $\kk$. 
Let $G$ act on the commutative $\kk$-algebra $A$. Then the nilradical of $A$ is a $G$-submodule.
\end{Lem}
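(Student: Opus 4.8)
The plan is to work with the comodule map $\Delta\colon A\to A\otimes_\kk\kk[G]$ and show that it carries the nilradical $N$ of $A$ into $N\otimes_\kk\kk[G]$. Since $G$ is smooth over $\kk$ it is in particular flat, so $\kk[G]$ is a flat $\kk$-algebra and the map $N\otimes_\kk\kk[G]\to A\otimes_\kk\kk[G]$ is injective; thus $N\otimes_\kk\kk[G]$ really is a $\kk$-submodule of $A\otimes_\kk\kk[G]$, and $\Delta(N)\subseteq N\otimes_\kk\kk[G]$ is exactly the assertion that $N$ is a $G$-submodule. Note that we may argue element by element, so no noetherian hypothesis on $\kk$ (and no finite generation of $N$) is needed here.

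First I would use that $\Delta$ is a $\kk$-algebra homomorphism, which is part of the definition of a $G$-action on $A$ recalled above. Hence for $a\in N$, say $a^n=0$, we get $\Delta(a)^n=\Delta(a^n)=0$, so $\Delta(a)$ lies in the nilradical of $A\otimes_\kk\kk[G]$.

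The heart of the argument is to show that $(A/N)\otimes_\kk\kk[G]$ is reduced; this is the one point where smoothness is genuinely used. The structure morphism $G\to\Spec\kk$ is smooth, so its base change $\Spec(A/N)\times_{\Spec\kk}G\to\Spec(A/N)$ is smooth, and since $\Spec(A/N)$ is reduced, a smooth morphism with reduced target has reduced source, so $\Spec\bigl((A/N)\otimes_\kk\kk[G]\bigr)$ is reduced. (The fact that a smooth morphism pulls reducedness back along itself is standard; it can be extracted from the local structure of smooth morphisms — étale over affine space — using that $C[x_1,\dots,x_n]$ is reduced when $C$ is and that étale morphisms preserve reducedness. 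Over a field it is the statement that a smooth algebra is geometrically reduced.) To finish, tensor the exact sequence $0\to N\to A\to A/N\to 0$ with the flat $\kk$-algebra $\kk[G]$ to identify $N\otimes_\kk\kk[G]$ with the kernel of $A\otimes_\kk\kk[G]\to(A/N)\otimes_\kk\kk[G]$. For $a\in N$ the image of $\Delta(a)$ in the reduced ring $(A/N)\otimes_\kk\kk[G]$ is nilpotent, hence zero, so $\Delta(a)\in N\otimes_\kk\kk[G]$, as required.

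I expect the only real obstacle to be the reducedness of $(A/N)\otimes_\kk\kk[G]$; everything else is formal comodule bookkeeping. If one preferred a more self-contained treatment, one could reduce to the noetherian situation by expressing $A$ as a filtered colimit of finitely generated $\kk$-subalgebras stable under the coaction and then invoke fibrewise geometric reducedness of a smooth algebra over a field; but citing the morphism-level statement is cleaner and avoids the colimit gymnastics.
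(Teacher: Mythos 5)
Your proof is correct and follows essentially the same route as the paper: observe the coaction is a ring map so it sends nilpotents to nilpotents, use smoothness of $G\to\Spec\kk$ to conclude that $(A/N)\otimes_\kk\kk[G]$ is reduced, and use flatness of $\kk[G]$ to identify $N\otimes_\kk\kk[G]$ with the kernel of $A\otimes_\kk\kk[G]\to(A/N)\otimes_\kk\kk[G]$. The paper simply phrases this more compactly by asserting that the coaction sends $N$ into the nilradical $N\otimes_\kk\kk[G]$ of $A\otimes_\kk\kk[G]$, citing EGA~IV~(17.5.7) or Stacks~033B for the reducedness step.
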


\begin{proof}
(Thanks to Angelo Vistoli \url{http://mathoverflow.net/questions/68366/}
for explaining to me  
  that smoothness is the right condition.) 

As the base change map
$A_\red\to A_\red\otimes_\kk \kk[G]$ is a smooth map, $A_\red\otimes_\kk \kk[G]$ is reduced,
by \cite[Prop. (17.5.7)]{EGA IV} or by \cite[Lemma 033B]{stacks} with URL
\url{http://stacks.math.columbia.edu/tag/033B}.

Now let $N$ denote the nilradical of $A$. 
The coaction $A\to A\otimes \kk[G]$ sends $N$ to the nilradical $N\otimes_\kk \kk[G]$ of
$A\otimes_\kk \kk[G]$.  \qed
\end{proof}

From now on let $G=G_\kk$, where $G_\Z$ is a Chevalley group over $\Z$. 
In other words, $G$ is a split reductive group scheme over 
$\kk$ under the conventions of \cite{SGA3}.
Choose 
a split maximal torus $T$, a standard Borel
subgroup $B$ and its unipotent radical $U$. 

\begin{Lem}
 The coordinate ring $\kk[G]$ is a free $\kk$-module.
\end{Lem}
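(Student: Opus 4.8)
The plan is to reduce immediately to the case $\kk=\Z$ and then realize $\Z[G_\Z]$ as a submodule of a manifestly free $\Z$-module. Since $G=G_\kk$ is obtained from $G_\Z$ by base change, one has $\kk[G]=\Z[G_\Z]\otimes_\Z\kk$, and the tensor product over $\Z$ of a free $\Z$-module with $\kk$ is a free $\kk$-module; so it suffices to prove that $\Z[G_\Z]$ is a free $\Z$-module. This is where working over $\Z$ pays off: $\Z$ is a principal ideal domain, and over a principal ideal domain \emph{every} submodule of a free module --- not necessarily of finite rank --- is again free. So I only need to embed $\Z[G_\Z]$ into an obviously free $\Z$-module.

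For that I would invoke the big cell. Let $B^\opp$ be the Borel subgroup scheme opposite to $B$ relative to $T$, and let $U^\opp$ be its unipotent radical. By the standard structure theory of split reductive group schemes (\cite[Exp.~XXII]{SGA3}) the multiplication map $U^\opp\times T\times U\to G_\Z$ is an open immersion onto an open subscheme $\Omega\subseteq G_\Z$; moreover $U^\opp$ and $U$ are isomorphic as $\Z$-schemes to an affine space over $\Z$ (a product of root subgroups), while $\Z[T]$ is the group algebra of the character lattice $X^*(T)$. Hence $\Z[\Omega]\cong\Z[U^\opp]\otimes_\Z\Z[T]\otimes_\Z\Z[U]$ is a Laurent polynomial ring over a polynomial ring over $\Z$, so it is a free $\Z$-module, with the evident monomial basis. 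Now $G_\Z$ is integral: it is reduced, being smooth over $\Z$, and irreducible, being flat over $\Z$ with irreducible generic fibre $G_\Q$. Therefore $\Omega$ is a nonempty open subscheme of the integral scheme $G_\Z$, and the restriction map $\Z[G_\Z]\to\Z[\Omega]$ is injective. Thus $\Z[G_\Z]$ is a submodule of a free $\Z$-module, hence free, and $\kk[G]$ is free over $\kk$.

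I do not expect a genuine obstacle; the argument is short. The only points that require a little care are precisely the ones flagged in the introduction as delicate over a non-noetherian, non-field base: that the big cell is an open subscheme of $G_\Z$ with the stated product structure, that $G_\Z$ is integral, and that a submodule of a free $\Z$-module is free. Each of these survives over $\Z$ --- the first two by \cite{SGA3}, the last by the classical theory of modules over a principal ideal domain. (An alternative route would be to use that $\Z[G_\Z]$, as a $G_\Z\times G_\Z$-module, carries an exhausting filtration whose successive quotients are finite free $\Z$-modules built from the dual Weyl modules $H^0(\lambda)$, which are $\Z$-free of finite rank by Kempf vanishing, and then to splice compatible $\Z$-bases of the steps; but the big cell argument is cleaner and avoids that machinery.)
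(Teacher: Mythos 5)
Your proof is correct and follows exactly the paper's argument: reduce to $\kk=\Z$ by base change, embed $\Z[G_\Z]$ into the coordinate ring of the big cell (which is visibly $\Z$-free), and conclude via the fact that a submodule of a free $\Z$-module is free. You supply a bit more justification than the paper (why the restriction to the big cell is injective), but the approach is the same.
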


\begin{proof}
 As $\kk[G]=\Z[G_\Z]\otimes_\Z \kk$ it suffices to treat the case $\kk=\Z$. Now the coordinate ring of $G$ is a subring of the 
coordinate ring of the big cell. And the coordinate ring of the big cell is clearly free as a $\Z$-module.
Now use that a submodule of a free $\Z$-module is free \cite[Chapter I, Theorem 5.1]{HS}. \qed
\end{proof}

\begin{Lem}\label{genby}
 If $V$ is a  $G$-module and $v\in V$, then the $G$-submodule generated by $v$ exists and is finitely
generated as a $\kk$-module.
\end{Lem}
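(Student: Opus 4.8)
The plan is to construct the desired submodule by hand from the comodule structure, using the freeness of $\kk[G]$ over $\kk$ (the preceding lemma) to make the standard local finiteness argument for comodules work over an arbitrary base ring.

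First I would fix a $\kk$-basis $(e_i)_{i\in I}$ of $\kk[G]$ and expand the coaction of $v$ as $\sum_{i\in I} v_i\otimes e_i$ with $v_i\in V$, a sum in which only finitely many terms are nonzero. Let $M\subseteq V$ be the $\kk$-submodule generated by those finitely many nonzero $v_i$. Then $M$ is finitely generated as a $\kk$-module by construction, and applying $\mathrm{id}\otimes\varepsilon$ (where $\varepsilon$ is the counit of $\kk[G]$) to the coaction of $v$ shows $v=\sum_i\varepsilon(e_i)v_i\in M$.

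Next I would check that $M$ is a $G$-submodule, i.e.\ that the coaction maps $M$ into $M\otimes_\kk\kk[G]$. Since the $v_i$ span $M$, it is enough to show the coaction of each $v_i$ lands in $M\otimes_\kk\kk[G]$. Writing the comultiplication of $\kk[G]$ in the chosen basis as $e_i\mapsto\sum_{j,l}c^i_{jl}\,e_j\otimes e_l$ with $c^i_{jl}\in\kk$ (finitely many nonzero for each $i$), coassociativity of the $\kk[G]$-comodule $V$ applied to $v$ gives an identity in $V\otimes_\kk\kk[G]\otimes_\kk\kk[G]$; comparing the two sides in the last tensor factor --- legitimate because $(e_l)_{l\in I}$ is a $\kk$-basis and tensoring over $\kk$ with the free module $\kk[G]$ preserves the resulting direct sum decomposition --- one reads off that the coaction of $v_l$ equals $\sum_{i,j}c^i_{jl}\,v_i\otimes e_j$, which lies in $M\otimes_\kk\kk[G]$.

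Finally I would verify minimality. If $W\subseteq V$ is any $G$-submodule with $v\in W$, then the coaction of $v$ lies in $W\otimes_\kk\kk[G]$; since $\kk[G]$ is free the inclusion $W\otimes_\kk\kk[G]\hookrightarrow V\otimes_\kk\kk[G]$ is injective and compatible with the decomposition along the basis $(e_i)$, so each $v_i$ already lies in $W$ and hence $M\subseteq W$. Thus $M$ is the smallest $G$-submodule of $V$ containing $v$: in particular it exists, and it is finitely generated over $\kk$. I do not expect a serious obstacle; the only delicate point is precisely the one addressed by the freeness lemma, namely that over an arbitrary $\kk$ the functor $-\otimes_\kk\kk[G]$ need not be exact, so neither coefficient comparison in $V\otimes_\kk\kk[G]$ nor the formation of intersections of $G$-submodules is automatic --- freeness of $\kk[G]$ is exactly what rescues both.
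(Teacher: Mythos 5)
Your proof is correct, and it is a self-contained version of exactly the argument the paper delegates to a citation: where the paper simply invokes \cite[Expos\'e VI, Lemme 11.8]{SGA3} after recording that $\kk[G]$ is $\kk$-free, you unwind that lemma's content --- the standard local-finiteness argument for comodules, with freeness of $\kk[G]$ used both to make coefficient comparison in $V\otimes_\kk\kk[G]$ meaningful and to keep $-\otimes_\kk\kk[G]$ exact for the minimality step. Same route, just spelled out.
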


\begin{proof}
 As $\kk[G]$ is a free $\kk$-module, this follows from \cite[Expos\'e VI, Lemme 11.8]{SGA3}.  \qed
\end{proof}

See also \cite[Proposition 3]{Se}.
Note that the existence result in the Lemma does not follow from the fact that $G$ is flat over $\kk$ 
\cite[Expos\'e VI, \'Edition 2011, Remarque 11.10.1]{SGA3}.

\begin{Def}
 Recall that we call a homomorphism of  $\kk$-algebras $f:A\to B$ 
\emph{power surjective} \cite[Definition 2.1]{FvdK}
if for every $b\in B$ there is an $n\geq1$ so that the power $b^n$ is in the image
of $f$.

A flat affine group scheme $H$ over $\kk$ is called \emph{power reductive} \cite[Definition 2]{FvdK}
if the following holds.
\begin{Property}[Power Reductivity]
Let $L$ be a cyclic $\kk$-module with trivial $H$-action. 
Let $M$ be a rational $H$-module, and let $\varphi$ be an $H$-module map from $M$ onto $L$.
Then there is a positive integer $d$ such that the $d$-th symmetric power of $\varphi$ induces a surjection:
\[
(S^d M)^H\to S^d L .
\]
Here $V^H=H^0(H,V)$ denotes the submodule of invariants in an $H$-module $V$.
\end{Property}
\end{Def}

\begin{Pro}\label{old new}
 Let $H$ be a flat  affine algebraic group scheme over $\kk$.
The following are equivalent
\begin{enumerate}
 \item $H$ is power reductive,\label{old def}
\item for every power surjective
$H$-homomorphism of  
commutative $\kk$-algebras $f:A\to B$ the map $A^H\to B^H$ is power surjective.\label{new def}
\end{enumerate}

\end{Pro}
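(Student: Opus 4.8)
The plan is to prove the two implications separately, with the bulk of the work in showing \eqref{old def} $\Rightarrow$ \eqref{new def}. For the easy direction \eqref{new def} $\Rightarrow$ \eqref{old def}, given an $H$-module surjection $\varphi\colon M\to L$ with $L$ cyclic and trivial, I would form the symmetric algebras $A=S^*M$ and $B=S^*L$ (a polynomial ring in one variable, or a quotient thereof), and observe that the induced map $S^*\varphi\colon A\to B$ is surjective, hence a fortiori power surjective. Applying \eqref{new def} gives that $A^H\to B^H$ is power surjective; picking a generator $\lambda$ of $L$, the element $\lambda\in B^H$ (here $B^H=B$ since the action is trivial) has some power $\lambda^d$ in the image of $A^H$, and since $\lambda^d$ lies in the degree-$d$ graded piece $S^dL$, a degree argument shows it is hit by $(S^dM)^H$. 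This is exactly Power Reductivity.

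For \eqref{old def} $\Rightarrow$ \eqref{new def}, let $f\colon A\to B$ be a power surjective $H$-homomorphism of commutative $\kk$-algebras and let $b\in B^H$. I want to produce $n\geq 1$ with $b^n\in f(A^H)$. The starting point is that, by power surjectivity of $f$ alone, some power $b^m$ already lies in $f(A)$; replacing $b$ by $b^m$ (which is still $H$-invariant) I may assume $b=f(a)$ for some $a\in A$, not necessarily invariant. By Lemma \ref{genby}, the $H$-submodule $M\subseteq A$ generated by $a$ is finitely generated over $\kk$; let $N=f(M)\subseteq B$ be its image, an $H$-submodule that is again finitely generated over $\kk$. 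Since $b=f(a)\in N$ is $H$-invariant, the cyclic submodule $L=\kk b\subseteq N$ is a trivial $H$-module, and $f$ restricts to an $H$-module surjection $M\to N\to L$ (after possibly enlarging $M$ so that its image contains $b$ — in fact $b=f(a)$ is already in $f(M)$). Here I must be slightly careful: $L$ should be taken as the cyclic $\kk$-module $\kk b$ with its quotient-of-$N$ structure, so I would take the composite $\varphi\colon M\to N\to N/N'$ where $N'$ is an $H$-submodule of $N$ with $N/N'\cong \kk b$; such $N'$ exists because $b$ generates a trivial submodule and one can check the relevant quotient is cyclic and trivial. Apply Power Reductivity to $\varphi\colon M\to L$: there is $d\geq 1$ with $(S^dM)^H\to S^dL$ surjective.

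Now I push this through the multiplication map. The symmetric power $S^dM$ maps, via the $\kk$-algebra structure of $A$ and the fact that the multiplication $A\otimes A\to A$ is an $H$-module map (and iterating), to $A$ by an $H$-module homomorphism $\mu_A\colon S^dM\to A$; similarly $S^dL\to B$ lands on $\kk b^d$, and the square relating $\mu_A$, $f$, $S^d\varphi$, and $\mu_B$ commutes. Taking invariants, an element of $(S^dM)^H$ mapping onto $b^d\in S^dL$ produces, after applying $\mu_A$, an element of $A^H$ mapping to $b^d\in B^H$ under $f$. Thus $b^d\in f(A^H)$, and combined with the earlier reduction $b\mapsto b^m$ we get $b^{md}\in f(A^H)$, proving power surjectivity of $A^H\to B^H$. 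The step I expect to be the main obstacle is the bookkeeping in the previous paragraph: producing the trivial cyclic quotient $L$ of a finitely generated $H$-submodule in a way that $b$ itself (not merely some power) is the image of a genuine $H$-invariant, and then verifying that the multiplication maps assemble into a commuting square compatible with taking $H$-invariants. Everything else is a formal consequence of Lemma \ref{genby} and the definition of Power Reductivity.
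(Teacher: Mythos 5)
Your argument for $(2)\Rightarrow(1)$ is correct and matches the paper's. The direction $(1)\Rightarrow(2)$ has a genuine gap, located exactly where you flagged it.

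You set $M\subseteq A$ to be the finitely generated $H$-submodule generated by $a$ (via Lemma~\ref{genby}), put $N=f(M)$, and then need a surjection from $M$ onto a cyclic \emph{trivial} $H$-module $L$ under which $b$'s preimages are visible. But $\kk b$ sits inside $N$ as a \emph{sub}module, not a quotient, so you try to manufacture an $H$-submodule $N'\subseteq N$ with $N/N'\cong\kk b$. The claim ``such $N'$ exists because $b$ generates a trivial submodule'' is unjustified and false in general: a trivial submodule need not reappear as a trivial quotient (this would amount to a splitting that power reductive groups certainly do not provide), and even if some trivial cyclic quotient existed there is no reason $b$ would map to a generator of it. Worse, once $L$ is an abstract quotient $N/N'$ rather than the submodule $\kk b\subseteq B$, your next step — ``$S^dL\to B$ lands on $\kk b^d$'' — has no meaning: there is no natural multiplication map from $S^d(N/N')$ into $B$, and supplying one is equivalent to the splitting you cannot produce.

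The detour through Lemma~\ref{genby} is what pushes you into this corner, and it is unnecessary: the Power Reductivity Property as stated has no finiteness hypothesis on $M$ (the paper emphasizes this in the remark immediately after the proposition). The paper's route sidesteps the problem entirely: after choosing $n$ with $b^n\in f(A)$, take $L=\kk b^n\subseteq B$, which is a trivial $H$-submodule of $B$ because $b$ is invariant, and set $M=f^{-1}(L)\subseteq A$. Then $M$ is an $H$-submodule of $A$ (preimage of a submodule under an $H$-map), $f\colon M\to L$ is already a surjection onto a cyclic trivial module, and the multiplication maps $S^dM\to A$ and $S^dL\to B$ fit into a commuting square with $S^df$ and $f$, with no splitting needed. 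Your concluding diagram chase is then exactly right and yields $b^{nd}\in f(A^H)$.
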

\begin{proof}
First assume \ref{old def}.
Let $f:A\to B$ be power surjective and let $b\in B^H$. Choose $n\geq1$ so that $b^n\in f(A)$. 
Let $M=f^{-1}(L)$ be the inverse image of $L=\kk b^n$. Choose $d\geq1$ so that $(S^d M)^H\to S^d L$ is surjective.
Multiplication induces $H$-module maps $S^dA\to A$, $S^dB\to B$.
One has a commutative diagram of $H$-homomorphisms 
\[\xymatrix{
\ar[r]\ar@{->>}[d](S^dM)^H&\ar[r]\ar[d] (S^dA)^H&\ar[d] A^H\ \\
\ar[r]S^dL&\ar[r] (S^dB)^H&B^H,
}\]
and one sees that $b^{nd}$ lies in the image of $A^H$ because it lies in the image of $S^dL$.

Conversely, assume \ref{new def} and let $M\to L$ be given as in the Property. One has a surjective map of symmetric algebras 
$S^*(M)\to S^*(L)$. Now let $b$ be a generator of $S^1L$. There is a power $b^d\in S^dL$ of $b$ that lies in the image of
$(S^*(M))^H$. But then $(S^d M)^H\to S^d L$ is surjective. 
 \qed
\end{proof}

\begin{remark}
 So the finite generation hypothesis does not belong in \cite[Proposition 6]{FvdK}. Note that there is no finiteness hypothesis
on $M$ in the Power Reductivity Property.
\end{remark}

\begin{Pro}\label{Upow}
 Let $G$ act rationally by $\kk$-algebra automorphisms on the commutative 
algebra $A$ and let $J$ be a $G$-invariant ideal. Then 
$A^U\to (A/J)^U$ is power surjective.
\end{Pro}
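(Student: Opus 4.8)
The statement is: $G$ acts rationally by algebra automorphisms on $A$, $J \subseteq A$ is a $G$-invariant ideal, and we want $A^U \to (A/J)^U$ to be power surjective. The key observation is that $U$ itself is not power reductive, but $G$ is — by the main result of \cite{FvdK}, $G_\kk$ is power reductive over any commutative ring $\kk$. So the strategy must be to reduce the statement about $U$-invariants to a statement about $G$-invariants. The standard device for this is the induced/coinduced module $\mathrm{ind}_B^G$ (or equivalently, working with the algebra of functions on $G/U$), which converts $U$-invariants into $G$-invariants of a larger algebra.

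First I would recall that for any $G$-module $V$, one has $V^U = V^B$ (since $B/U = T$ is linearly reductive? — no, that's not right over arbitrary $\kk$; rather $V^U$ carries a $T$-action and $V^B = (V^U)^T$, which need not equal $V^U$). The cleaner route: use the algebra $\kk[G/U] = \kk[G]^U$, where $U$ acts on $\kk[G]$ by right translation. For a $G$-algebra $A$, consider $A \otimes_\kk \kk[G/U]$ with the diagonal $G$-action (via the $G$-action on $A$ and, say, left translation on $\kk[G/U]$). A classical fact — which I'd state carefully in the arbitrary-base-ring setting, citing \cite{J} or \cite{Grosshans contr} — is that $(A \otimes \kk[G/U])^G \cong A^U$ as algebras, the isomorphism being a form of Frobenius reciprocity. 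The same holds with $A$ replaced by $A/J$. So the map $A^U \to (A/J)^U$ is identified with $(A \otimes \kk[G/U])^G \to ((A/J) \otimes \kk[G/U])^G$.

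Now I would apply Proposition \ref{old new}: since $G$ is power reductive, it suffices to show that $A \otimes \kk[G/U] \to (A/J) \otimes \kk[G/U]$ is a power-surjective $G$-homomorphism of commutative $\kk$-algebras. But this map is $(\,\cdot\,) \otimes \mathrm{id}$ applied to the surjection $A \to A/J$, so it is actually surjective (flatness of $\kk[G/U]$ as a $\kk$-module is not even needed for surjectivity of a tensored quotient map, but I should check $\kk[G/U]$ is reasonable — it is a quotient... actually $\kk[G/U] = \kk[G]^U$ is a submodule of the free module $\kk[G]$, hence flat since over suitable rings submodules of free are flat; in any case surjectivity of $A\otimes M \to (A/J)\otimes M$ holds for arbitrary $M$). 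A surjection is a fortiori power surjective, so Proposition \ref{old new} gives that $(A\otimes\kk[G/U])^G \to ((A/J)\otimes\kk[G/U])^G$ is power surjective, which is the desired conclusion after transporting back along the Frobenius reciprocity isomorphisms.

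The main obstacle I anticipate is the Frobenius reciprocity identification $(A \otimes \kk[G/U])^G \cong A^U$ at the level of algebras over an arbitrary (non-noetherian, non-reduced) base ring, making sure the isomorphism is natural in $A$ so that it intertwines the two quotient maps, and being careful about which $G$-action on $\kk[G/U]$ is used and whether $\kk[G]$ being a free $\kk$-module (Lemma, already proved) is enough to make $\kk[G/U]$ behave well (e.g. that $A \otimes \kk[G]^U = (A \otimes \kk[G])^U$, which uses that taking $U$-invariants commutes with the flat — indeed free — base change by $A$, or more precisely commutes with $\otimes_\kk$ against any $\kk$-module; this is where freeness of $\kk[G]$ enters). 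Once that bookkeeping is in place, the power reductivity input does all the real work.
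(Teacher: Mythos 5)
Your proposal follows the same route as the paper: identify $A^U$ with $(A \otimes_\kk \kk[G/U])^G$ via the transfer principle, observe that $A \otimes_\kk \kk[G/U] \to (A/J) \otimes_\kk \kk[G/U]$ is surjective (hence power surjective), and apply power reductivity of $G$ through Proposition \ref{old new}. You correctly flag the transfer principle over an arbitrary base ring as the one delicate point; the paper establishes it via Frobenius reciprocity, transitivity of induction ($\ind_U^G = \ind_B^G\ind_U^B$ with $\ind_U^B A = A\otimes_\kk \kk[T]$), and the tensor identity for weights \cite[Proposition 17]{FvdK}, rather than the route you sketch of commuting $U$-invariants past the base change by the free module $\kk[G]$.
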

\begin{proof}
The transfer principle \cite[Ch. Two]{G} tells that $A^U=(A\otimes_\kk \kk[G/U])^G$, 
where $\kk[G/U]$ means the algebra of $U$-invariants
in $\kk[G]$ under the action by right translation. Here is one proof. 
Write $$A^U=\hom_U(\kk,A)=\hom_G(\kk,\ind_U^GA)=(\ind_B^G\ind_U^BA)^G=(\ind_B^G(A\otimes_\kk \kk[T]))^G,$$
where the $B$-module $\kk[T]$ is a direct sum of weights of $B$, so that $\ind_B^G(A\otimes_\kk \kk[T])$
 equals $A\otimes_\kk\ind_B^G( \kk[T])$ by the tensor identity for weights \cite[Proposition 17]{FvdK}. 
Further $\ind_B^G \kk[T]=\ind_U^G \kk= \kk[G/U]$, so 
$A^U=(A\otimes_\kk \kk[G/U])^G$ indeed.
We may identify $A^U\to (A/J)^U$ with $(A\otimes_\kk \kk[G/U])^G\to (A/J\otimes_\kk \kk[G/U])^G$.
Now use that $G$ is power reductive \cite[Theorem 12]{FvdK} and apply Proposition \ref{old new}. \qed %
\end{proof}

\section{The integrality theorem}
If $G$ acts rationally by $\kk$-algebra automorphisms on our algebra $A$, we denote by $G\cdot A^U$ the 
$\kk$-subalgebra generated 
by the $G$-submodules generated by the elements of $A^U$.
Thus $G\cdot A^U$ is the smallest $G$-invariant subalgebra of $A$ that contains $A^U$.
Our main result is the following generalization of {\cite[Theorem 5]{Grosshans contr}}.

\begin{theorem}\label{hullint}
The algebra $A$ is integral over $G\cdot A^U$.
\end{theorem}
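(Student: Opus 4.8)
The plan is to reproduce Grosshans's argument, using the tools of the Preliminaries in place of the hypotheses he had available (an algebraically closed field, and noetherianity). Write $R:=G\cdot A^U$. It is harmless to replace $R$ at the outset by its integral closure $\widetilde R$ in $A$: this is again $G$-stable (integral closure is compatible with the base change along the coaction, as $G$ is smooth and $\kk[G]$ is $\kk$-free), it still contains $A^U$, and $A$ is integral over $R$ precisely when $\widetilde R=A$. So the theorem is equivalent to the assertion that \emph{a $G$-stable subalgebra $R\subseteq A$ which is integrally closed in $A$ and contains $A^U$ must equal $A$}. A first reduction makes $A$ finitely generated over $\kk$: for $a\in A$, Lemma \ref{genby} yields a $G$-submodule $M\ni a$ that is finitely generated over $\kk$, and then $\kk[M]$ is a finitely generated $G$-stable subalgebra with $G\cdot\kk[M]^U\subseteq R$, so it suffices to treat $\kk[M]$ in place of $A$.

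A second reduction makes $A$ reduced. Let $N$ be the nilradical of $A$; by Lemma \ref{nilinv}, applicable because a Chevalley group scheme is smooth, $N$ is a $G$-stable ideal, and $N\subseteq R$ since nilpotents are integral over every subring. The image $\overline R$ of $R$ in $A/N$ is again integrally closed there — lift a monic relation with coefficients in $\overline R$; the lifted element lies in $N$, hence is nilpotent, and a power of it is a monic relation over $R$ — it is $G$-stable, and it contains the image of $A^U$. Since $A^U\to(A/N)^U$ is power surjective by Proposition \ref{Upow}, every element of $(A/N)^U$ has a power in $\overline R$, and as $\overline R$ is integrally closed this forces $(A/N)^U\subseteq\overline R$, whence $G\cdot(A/N)^U\subseteq\overline R$. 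Granting the theorem for $A/N$, we conclude that $A/N$ is integral over $\overline R$, so $\overline R=A/N$; together with $N\subseteq R$ this yields $R=A$. Hence we may assume $A$ finitely generated and reduced.

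The remaining case, $A$ finitely generated and reduced, is the heart of the matter. Over a field it is (a case of) Grosshans's original theorem, whose proof reduces to $A$ a domain — using that $G$ is connected, so stabilizes each minimal prime, and that $A$ embeds in the finite product of the $A/\mathfrak p$, over which integrality may be tested factor by factor — and then, for a domain $A$, constructs the monic equations over $G\cdot A^U$ out of the transfer identity $A^U=(A\otimes_\kk\kk[G]^U)^G$ already used in the proof of Proposition \ref{Upow}, out of the normality of $\overline{G/U}=\Spec\kk[G]^U$ and the fact that $G/U$ is open in it with complement of codimension at least two, and out of geometric reductivity, which for us is power reductivity (Proposition \ref{old new}). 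The step I expect to be the main obstacle is to make this field-theoretic core work over an arbitrary, possibly non-noetherian, ring $\kk$; in particular the reduction to a domain breaks down when $\kk$ has infinitely many minimal primes. I would get around this by spreading out: the coaction on the finitely generated $\kk$-algebra $A$ involves only finitely many elements of $\kk$, and the freeness of $\kk[G]$ makes the descent of the whole $G$-algebra structure to a finitely generated $\Z$-subalgebra $\kk_0\subseteq\kk$ clean, so one may assume $\kk$ noetherian. One then induces on $\dim\kk$: the base case is a field, where Grosshans's theorem applies — after base change to the algebraic closure, which is faithfully flat, so that integrality descends — and in the inductive step one cuts by a suitable $c\in\kk$; since $c\in\kk\subseteq A^U$ the ideal $cA$ is $G$-stable, so Proposition \ref{Upow} remains available for $A\to A/cA$, while $\dim\kk/c\kk<\dim\kk$, and one combines the information over $A/cA$ with that read off at the generic points of $\Spec\kk$ through relations of the form $c^NA\subseteq R$ and $A=R+cA$. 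The careful bookkeeping in this induction, in particular the interaction with $\kk$-torsion in $A$, is where the real work lies.
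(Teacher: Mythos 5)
The paper's proof takes a quite different and much shorter route than yours: it reduces at once to $A=S^*_\kk(V)$ with $V$ a finitely generated $G$-submodule containing the given element, forms the ideal $J\subseteq A$ generated by $A^+\cap(G\cdot A^U)$, and shows that $A^+\subseteq\sqrt J$. The contradiction there uses only Lemma \ref{nilinv} (to make $\sqrt J$ $G$-stable), the existence of a nonzero $U$-invariant in a nonzero $G$-module, and Proposition \ref{Upow} applied to $A\to A/\sqrt J$. The grading of $S^*(V)$ then forces each generator $v_i$ to satisfy a monic relation of the form $v_i^m=\sum_{|I|<m}a_Iv^I$ with $a_I\in G\cdot A^U$, so $A$ is a finite $G\cdot A^U$-module. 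No domain reduction, no passage to a field, no spreading out, no noetherian induction.

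Your proposal follows Grosshans's original blueprint and then tries to patch the base-ring hypotheses by spreading out and inducting on $\dim\kk$. Two things are genuinely missing. First, the very first step, that the integral closure $\widetilde R$ of $R$ in $A$ is $G$-stable, is not automatic: the coaction shows $\rho(a)$ is integral over $R\otimes_\kk\kk[G]$, but you then need the integral closure of $R\otimes\kk[G]$ in $A\otimes\kk[G]$ to be $\widetilde R\otimes\kk[G]$, and freeness of $\kk[G]$ alone does not give this; you would need an actual argument (and in fact the paper avoids the issue entirely). Second, and more seriously, the noetherian induction is not an argument but a wish: you say you would "cut by a suitable $c$" and "combine the information over $A/cA$ with that at the generic points through relations of the form $c^NA\subseteq R$ and $A=R+cA$," and you explicitly concede that "the careful bookkeeping\ldots is where the real work lies." But those relations are essentially equivalent to what you are trying to prove (finite generation of $A$ over $R$), and nothing in the sketch explains how to get them from the inductive hypothesis in the presence of $\kk$-torsion in $A$, or how to control the difference between $G\cdot(A/cA)^U$ and the image of $R$ beyond power surjectivity. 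As it stands, the inductive step is a gap, not a proof. The reductions you do carry out (to $A$ finitely generated via Lemma \ref{genby}, and to $A$ reduced via Lemma \ref{nilinv} and Proposition \ref{Upow}) are sound, and the field case would indeed reduce to Grosshans's theorem after a faithfully flat base change to the algebraic closure; but the core of the argument over a general base is not there. I'd recommend looking at the paper's $S^*(V)$ reduction, which sidesteps all of these difficulties.
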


\begin{proof}
 As Grosshans works over an algebraically closed field there are some details that need to be checked now.
Let $v\in A$. We have to show that $v$ is integral over $G\cdot A^U$. Let $V$ be the $G$-submodule of $A$ 
generated by $v$ and consider the symmetric algebra $S_\kk^*(V)$ on
$V$. Using the obvious map from $S_\kk^*(V)$ to $A$ one sees that it suffices to prove
the theorem for the algebra $S_\kk^*(V)$. So from now on let $A=S_\kk^*(V)$.  Let $A^+$ be the augmentation ideal
generated by $V$ in $A$. Let $J$ be the ideal of $A$ generated by $A^+\cap (G\cdot A^U)$. 
It is $G$-invariant, so its radical $\sqrt J$ is also $G$-invariant, by lemma \ref{nilinv} applied to $A/J$.
We claim that $A^+\subseteq \sqrt J$. Suppose not. Then $A^+/(A^+\cap\sqrt J)$ is nontrivial. Now every nontrivial $G$-module
$N$ has at least one nontrivial $U$-invariant.
(Note that by lemma \ref{genby} we may reduce to the case that $N$ has finitely many weight spaces.)
Say $0\neq f\in (A^+/(A^+\cap\sqrt J))^U$. View $f$ as a nonzero element of $A/\sqrt J$.
By Proposition \ref{Upow} there is a power of $f$ that lies in the image of $A^U$ in the algebra $A/\sqrt J$. 
But then it actually lies in the image of $A^+\cap (G\cdot A^U)$, hence in the image of $J$, which is zero. 
But $A/\sqrt J$ is reduced; contradiction.

Let $v_1$, \ldots, $v_n$ generate $V$ as a 
$\kk$-module. Every element $f$ of $J$ may be written as a sum of terms $a_Iv^I$, where $I=(I_1,\ldots,I_n)$, 
$a_I\in A^+\cap (G\cdot A^U)$ and $v^I:=v_1^{I_1}\cdots v_n^{I_n}$. Moreover, if $f$ is homogeneous of degree $d$,
then the $a_I$ may be taken homogeneous of degree $d-|I|$ where $|I|=I_1+\cdots +I_n$. In particular, all terms
have $|I|<d$.
As $A^+\subseteq \sqrt J$ we may choose $m$ so large that $v_i^m\in J$ for all $i$. 
So then $v_i^m$ may be written as a sum of terms
$a_Iv^I$ with $|I|<m$. It follows that $A^+$ is generated as a $G\cdot A^U$-module by finitely many $v^I$. 
The theorem follows.  \qed
\end{proof}

Let $B^\opp$ denote the Borel subgroup scheme containing  $T$ that is opposite to $B$. 
If $V$ is a $T$-module, then $\inf_T^{B^\opp}V$ denotes the
$B^\opp$-module obtained by composition with the standard homomorphism $B^\opp\to T$.
Recall that Grosshans has introduced a filtration on any $G$-module $M$ (after Luna). 
Its associated graded module $\gr M$ can be embedded
into the module $\hull_\nabla(\gr M):=\ind_{B^\opp}^G\inf_T^{B^\opp}M^U $. 
One knows that $(\hull_\nabla(\gr M))^U=(\gr M)^U$ and 
that $\h^i(G,\hull_\nabla(\gr M))$ vanishes for positive $i$. If $G$ acts on the commutative $\kk$-algebra $A$, then  
$\gr A$ and $\hull_\nabla(\gr A)$ are commutative $\kk$-algebras. See \cite{FvdK} for details on all this.

We now get a proof  of \cite[Theorem 32]{FvdK} in the style of Grosshans \cite{Grosshans contr}.

\begin{Cor}
 \label{torsion}
Let $A$ be a finitely generated commutative $\kk$-algebra on which $G$ acts rationally by $\kk$-algebra 
automorphisms. If $\kk$ is Noetherian,
there is a positive integer $n$ so that:
\[
n\;\hull_\nabla(\gr A)\subseteq \gr A
.\]
In particular $\h^i(G,\gr A)$ is annihilated by $n$ for positive $i$.
\end{Cor}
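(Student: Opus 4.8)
The plan is to use Theorem \ref{hullint} to reduce the integrality statement there to a finiteness statement, and then exploit Noetherianity plus the algebra structure of $\hull_\nabla(\gr A)$. First I would observe that since $\kk$ is Noetherian and $A$ is a finitely generated $\kk$-algebra on which $G$ acts, the graded algebra $\gr A$ is again a finitely generated $\kk$-algebra: it is a $G$-stable subquotient of a polynomial extension, and Grosshans' filtration is exhaustive with finitely generated pieces because each generator of $A$ lies in a finitely generated $G$-submodule (lemma \ref{genby}). Next, recall that $(\hull_\nabla(\gr A))^U = (\gr A)^U$, so in particular $\gr A$ and $\hull_\nabla(\gr A)$ have the \emph{same} algebra of $U$-invariants, and hence the same subalgebra $G\cdot (\gr A)^U$. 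Applying Theorem \ref{hullint} to the commutative $\kk$-algebra $B := \hull_\nabla(\gr A)$, we learn that $B$ is integral over $G\cdot B^U = G\cdot (\gr A)^U \subseteq \gr A$.

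From here I would argue as follows. The inclusion $\gr A \hookrightarrow \hull_\nabla(\gr A)$ is an inclusion of $\gr A$-modules, and we have just seen that $\hull_\nabla(\gr A)$ is integral over the subring $G\cdot(\gr A)^U$ of $\gr A$. Because $\gr A$ is a finitely generated $\kk$-algebra and $\kk$ is Noetherian, $\gr A$ is Noetherian, and $\hull_\nabla(\gr A)$, being integral over $\gr A$ and generated over $\gr A$ by finitely many homogeneous elements (this is the content that the proof of Theorem \ref{hullint} really delivers — finitely many $v^I$ suffice), is a finitely generated $\gr A$-module. Pick homogeneous generators $w_1,\dots,w_r$ of $\hull_\nabla(\gr A)$ as a $\gr A$-module. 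Each $w_j$, viewed in the localization or just cleared of denominators, satisfies: there is an integer $n_j \geq 1$ with $n_j w_j \in \gr A$. Indeed, in each graded degree the $\kk$-modules involved are finitely generated, $\hull_\nabla(\gr A)_d / (\gr A)_d$ is a finitely generated $\kk$-module, and one needs that it is torsion; this torsion-ness is exactly where the integrality over $G\cdot(\gr A)^U \subseteq \gr A$ combined with the fact that $\hull_\nabla(\gr A)$ and $\gr A$ agree after inverting suitable integers enters. Taking $n := \prod_j n_j$ (or the lcm) then gives $n\,\hull_\nabla(\gr A) \subseteq \gr A$ as claimed.

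For the last sentence, the short exact sequence of $G$-modules
\[
0 \to \gr A \to \hull_\nabla(\gr A) \to Q \to 0
\]
with $Q$ annihilated by $n$ yields a long exact cohomology sequence; since $\h^i(G,\hull_\nabla(\gr A)) = 0$ for $i>0$, we get $\h^i(G,\gr A) \cong \h^{i-1}(G,Q)$ for $i \geq 2$ and a surjection $\h^0(G,Q) \twoheadrightarrow \h^1(G,\gr A)$, and in all cases the target is annihilated by $n$ because $Q$ is.

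The main obstacle I anticipate is the torsion claim: showing that $\hull_\nabla(\gr A)_d/(\gr A)_d$ is not merely finitely generated over $\kk$ but actually torsion, uniformly enough that a \emph{single} integer $n$ works across all degrees $d$. The degreewise finite generation is clean, but a priori each degree could contribute a new prime to the annihilator. The resolution should be that $\hull_\nabla(\gr A)$ is a \emph{finitely generated} $\gr A$-module — not just finitely generated in each degree — which is precisely what the proof of Theorem \ref{hullint} establishes (the reduction to finitely many monomials $v^I$); with finitely many homogeneous module generators, finitely many integers $n_j$ suffice and their product is the desired $n$. So the real work is to extract from the proof of Theorem \ref{hullint}, applied to the algebra $\hull_\nabla(\gr A)$, the stronger conclusion that it is module-finite over a finitely generated subalgebra of $\gr A$, and then to check that over a Noetherian $\kk$ each module generator is killed, after multiplication by a suitable positive integer, into $\gr A$ — using that $\gr A$ and $\hull_\nabla(\gr A)$ become equal after tensoring with $\Q$, since over $\Q$ the group $G$ is linearly reductive and the Grosshans filtration is split.
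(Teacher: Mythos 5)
Your overall strategy coincides with the paper's: apply Theorem~\ref{hullint} to conclude that $\hull_\nabla(\gr A)$ is integral over $\gr A$, show that it is a finitely generated $\gr A$-module, observe that after tensoring with $\Q$ the inclusion $\gr A\hookrightarrow\hull_\nabla(\gr A)$ becomes an isomorphism so that the cokernel is torsion, choose $n$ accordingly, and finish with the long exact cohomology sequence and the acyclicity of $\hull_\nabla(\gr A)$. So the skeleton is right.

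There is, however, a genuine gap in the step that produces module-finiteness. You assert that $\hull_\nabla(\gr A)$ is ``generated over $\gr A$ by finitely many homogeneous elements (this is the content that the proof of Theorem~\ref{hullint} really delivers --- finitely many $v^I$ suffice).'' That is not what Theorem~\ref{hullint} or its proof gives. The proof of Theorem~\ref{hullint} works one element $v$ at a time: it shows the augmentation ideal of $S^*_\kk(V)$ is a finite module over $G\cdot S^*_\kk(V)^U$, which is exactly what is needed to prove integrality of $v$, and nothing more. Integrality alone does \emph{not} imply module-finiteness, even over a Noetherian base --- an integral extension of a Noetherian ring need not be a finite module (Nagata's examples). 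What is actually needed, and what the paper invokes, is the separate theorem that $\hull_\nabla(\gr A)$ is a finitely generated $\kk$-algebra (\cite[Theorem 30]{FvdK}); combined with integrality over the Noetherian ring $\gr A$, \emph{that} yields finite generation as a $\gr A$-module. You need to cite this input rather than extract it from Theorem~\ref{hullint}.

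A smaller point: your justification of the torsion claim (``over $\Q$ the group $G$ is linearly reductive and the Grosshans filtration is split'') gestures at the right idea but is not quite the argument. Splitting of the filtration of $A$ is not what is used; what is used is the explicit structure of $\hull_\nabla(\gr A)\otimes_\Z\Q$ as a direct sum of induced modules $\ind_{B^\opp}^{G}\inf_T^{B^\opp}A^U_\lambda\otimes_\Z\Q$ with highest weight $\lambda$, together with the fact that $(\gr A)^U=(\hull_\nabla(\gr A))^U$, so the image of $\gr A\otimes\Q$ contains every highest weight space and therefore (over $\Q$) all of $\hull_\nabla(\gr A)\otimes\Q$. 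Once these two points are repaired your argument matches the one in the paper.
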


\begin{proof}
 As in the proof of \cite[Theorem 8]{Grosshans contr} theorem \ref{hullint} shows
that $\hull_\nabla(\gr A)$ is integral over $\gr A$. As it is also a finitely generated $\kk$-algebra \cite[Theorem 30]{FvdK},
it is a finitely generated module over $\gr A$. View $\hull_\nabla(\gr A)\otimes_\Z\Q$ as a $G_\Q$-module
 \cite[Remark 52]{FvdK}.
It is a direct sum of modules $\ind_{B^\opp}^G\inf_T^{B^\opp} A^U_\lambda \otimes_\Z\Q$ with highest weight $\lambda$
(if we consider the roots of $B$ positive).
As the image of $\gr A\otimes_\Z\Q$ in $\hull_\nabla(\gr A)\otimes_\Z\Q$ contains the highest weight spaces,
the injection $\gr A\to \hull_\nabla(\gr A)$ becomes an isomorphism
after tensoring with $\Q$. So $\hull_\nabla(\gr A)/\gr A$ is a finitely generated $\gr A$-module and a
torsion abelian group. Choose $n>0$ so that $n$ annihilates $\hull_\nabla(\gr A)/\gr A$.
Then it also annihilates $\h^{i-1}(G,\hull_\nabla(\gr A)/\gr A)$, hence $\h^i(G,\gr A)$,
for $i>0$.  \qed 
\end{proof}

\end{document}